\numberwithin{equation}{section}
\DeclareDocumentCommand{\shortexact}{s O{} O{} mmmm}{
\IfBooleanTF{#1}{ 
 \xymatrix{
  1\ar[r] & #4\ar[r]^-{#2} & #5\ar[r]^-{#3} & #6\ar[r] & 1#7
 }
}{ 
 \xymatrix{
  0\ar[r] & #4\ar[r]^-{#2} & #5\ar[r]^-{#3} & #6\ar[r] & 0#7
 }
}}
\newcommand{\Z}{\mathbb Z}
\newcommand{\Q}{\mathbb Q}
\newcommand{\R}{\mathbb R}
\newcommand{\RP}{\mathbb{RP}}
\renewcommand{\O}{\mathrm O}
\newcommand{\SO}{\mathrm{SO}}
\newcommand{\Spin}{\mathrm{Spin}}
\newcommand{\Pin}{\mathrm{Pin}}
\newcommand{\Pinp}{\relax\ifmmode{\Pin^+}\else Pin\textsuperscript{$+$}\xspace\fi}
\newcommand{\pinp}{pin\textsuperscript{$+$}\xspace}
\newcommand{\Pinm}{\relax\ifmmode{\Pin^-}\else Pin\textsuperscript{$-$}\xspace\fi}
\newcommand{\pinm}{pin\textsuperscript{$-$}\xspace}
\newcommand{\csum}{\mathop{\#}}
\DeclareMathOperator{\Det}{Det}
\newcommand{\Aut}{\mathrm{Aut}}
\DeclareMathOperator{\Hom}{Hom}
\DeclareMathOperator{\Ext}{Ext}
\newcommand{\Top}{\mathrm{Top}}
\newcommand{\STop}{\mathrm{STop}}
\newcommand{\TopSpin}{\mathrm{TopSpin}}
\newcommand{\TopPin}{\mathrm{TopPin}}
\newcommand{\surj}{\twoheadrightarrow}
\newtheorem{thm}[equation]{Theorem}
\newtheorem*{thm*}{Theorem}
\newtheorem*{Thomthm}{\cref{Thom_simplification}}
\newtheorem{lem}[equation]{Lemma}
\newtheorem{prop}[equation]{Proposition}
\theoremstyle{definition}
\newtheorem{exm}[equation]{Example}
\newtheorem{defn}[equation]{Definition}
\theoremstyle{remark}
\newtheorem{rem}[equation]{Remark}
\crefname{thm}{Theorem}{Theorems}
\crefname{lem}{Lemma}{Lemmas}
\crefname{cor}{Corollary}{Corollaries}
\crefname{prop}{Proposition}{Propositions}
\crefname{ex}{Exercise}{Exercises}
\crefname{exm}{Example}{Examples}
\crefname{defn}{Definition}{Definitions}
\crefname{claim}{Claim}{Claims}
\crefname{rem}{Remark}{Remarks}
\crefname{fct}{Fact}{Facts}
\crefname{note}{Note}{Notes}
\newcommand{\term}{\emph} 
\DeclarePairedDelimiter\abs{\lvert}{\rvert}
\DeclarePairedDelimiter\set{\{}{\}}
\newcommand{\vp}{\varphi}
\newcommand{\inj}{\hookrightarrow}
\newcommand{\NewThomSpectrum}[1]{\expandafter\newcommand\csname M#1\endcsname{\mathit{M#1}}}
\newcommand{\NewMTSpectrum}[1]{\expandafter\newcommand\csname MT#1\endcsname{\mathit{MT#1}}}
\newcommand{\BothThomSpectra}[1]{\NewThomSpectrum{#1}\NewMTSpectrum{#1}}
\title{Stable diffeomorphism classification of some unorientable 4-manifolds}
\author{Arun Debray}
\address{Department of Mathematics, University of Texas, Austin, Texas 78712}
\email{a.debray@math.utexas.edu}
\date{\today}
\begin{document}
\maketitle

\begin{abstract}
Kreck's modified surgery theory reduces the classification of closed, connected 4-manifolds, up to connect sum with
some number of copies of $S^2\times S^2$, to a series of bordism questions. We implement this in the case of
unorientable 4-manifolds $M$ and show that for some choices of fundamental groups, the computations simplify
considerably. We use this to solve some cases in which $\pi_1(M)$ is finite of order 2 mod 4: under an assumption on
cohomology, there are nine stable diffeomorphism classes for which $M$ is pin$^+$, one stable diffeomorphism class
for which $M$ is pin$^-$, and four stable diffeomorphism classes for which $M$ is neither. We also determine the
corresponding stable homeomorphism classes.
\end{abstract}


\setcounter{section}{-1}
\section{Introduction}
	The classification of closed $4$-manifolds up to diffeomorphism is impossible in general: a solution would also
solve the word problem for groups. Even if one fixes the fundamental group to avoid this problem, the
classification is still currently intractable. For this reason, topologists study weaker classifications of
$4$-manifolds which are coarse enough to be calculable yet fine enough to be useful.

Stable diffeomorphism is an example of such an invariant. Two closed $4$-manifolds $M$ and $N$ are \term{stably
diffeomorphic} if there are $m,n\ge 0$ such that $M\csum m(S^2\times S^2)$ is diffeomorphic to $N\csum n(S^2\times
S^2)$. This notion of equivalence has applications to quantum topology: for example, Reutter~\cite[Theorem
A]{Reu20} shows that the partition functions of 4d semisimple oriented TFTs are insensitive to stable
diffeomorphism along the way to showing that such TFTs cannot distinguish homotopy-equivalent closed, oriented
$4$-manifolds. And stable diffeomorphism classes are computable: once the fundamental group $G$ is fixed,
Kreck~\cite{Kre99} shows how to reduce the classification of $4$-manifolds up to stable diffeomorphism to a
collection of bordism computations, and for many choices of $G$, the classification of closed, connected, oriented
$4$-manifolds with $\pi_1(M)\cong G$ up to stable diffeomorphism has been completely worked out, thanks to work of
Wall~\cite{Wal64}, Teichner~\cite{Tei92}, Spaggiari~\cite{Spa03}, Crowley-Sixt~\cite{CS11},
Politarczyk~\cite{Pol13}, Kasprowski-Land-Powell-Teichner~\cite{KLPT17}, Pedrotti~\cite{Ped17},
Hambleton-Hildum~\cite{HH19}, and Kasprowski-Powell-Teichner~\cite{KPT20}.

Researchers interested in topological manifolds also study \term{stable homeomorphism} of topological manifolds,
i.e.\ homeomorphism after connect-summing with some number of copies of $S^2\times S^2$. Kreck's theorem applies to
this case too, reframing the question in terms of bordism of topological manifolds. Stable homeomorphism
classifications are studied by Teichner~\cite[\S 5]{Tei92}, Wang~\cite{Wan95},
Hambleton-Kreck-Teichner~\cite{HKT09}, Kasprowski-Land-Powell-Teichner~\cite[\S\S 4--5]{KLPT17},
Hambleton-Hildum~\cite{HH19}, and Kasprowski-Powell-Teichner~\cite[\S 2.3]{KPT20},

Much less work has been done on unorientable $4$-manifolds, even though the theory still works and is simpler in
some cases, as we explain below. There is some work in the literature, such as that of Kreck~\cite{Kre84},
Wang~\cite{Wan95}, Kurazono~\cite{Kur01}, Davis~\cite{Dav05}, and Friedl-Nagel-Orson-Powell~\cite[\S 12]{FNOP19}.

The goal of this paper is to compute sets of stable diffeomorphism and stable homeomorphism classes for a class of
unorientable $4$-manifolds, as well as determining the corresponding complete stable diffeomorphism and
homeomorphism invariants. As a consequence of our \cref{Thom_simplification}, for many finite groups $G$, the
classification of stable diffeomorphism or homeomorphism classes of unorientable $4$-manifolds with $\pi_1(M) \cong
G$ reduces to the stable classifications for a smaller $2$-group. For example, we show that the stable
diffeomorphism, resp.\ homeomorphism classification when $\pi_1(M)\cong\Z/2$ determines the stable diffeomorphism,
resp.\ homeomorphism classification for some groups $G$ of order $2\bmod 4$. We then compute these classifications
using Kreck's techniques.

Suppose $G$ is the fundamental group of an unorientable manifold. Then there is an extension
\begin{equation}
\label{unor_SES}
	\shortexact*{K}{G}{\Z/2},
\end{equation}
where $G\surj\Z/2$ is defined by classifying loops as orientation-preserving or orientation-reversing. Therefore
$\Z/2$ acts on $K$.
\begin{thm*}[Main theorem]
Let $G$ be a finite group of order $2\bmod 4$, and suppose that in~\eqref{unor_SES}, $\Z/2$ acts trivially on
$H^*(BK)$.
\begin{enumerate}
	\item There are fourteen equivalence classes of closed, connected, unorientable $4$-manifolds $M$ up to stable
	diffeomorphism: nine for which $M$ is \pinp, one for which $M$ is \pinm, and four for which $M$ is neither.
	\item There are twenty equivalence classes of closed, connected, unorientable topological $4$-manifolds $M$ up
	to stable homeomorphism: ten for which $M$ is \pinp, two for which $M$ is \pinm, and eight for which $M$ is
	neither.
\end{enumerate}
\end{thm*}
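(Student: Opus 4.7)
My plan is to implement Kreck's modified surgery program and to exploit \cref{Thom_simplification} to reduce the relevant bordism calculations from arbitrary $G$ satisfying the hypotheses to the case $G \cong \Z/2$, where they become tractable.

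First I would enumerate the normal $2$-types realized by such manifolds. The normal $1$-type is the pair $(G, w_1)$ with $w_1\colon G\surj\Z/2$ the orientation character, and the normal $2$-type refines this by a class $c \in H^2(BG;\Z/2)$ recording $w_2$: $c = 0$ gives the pin$^+$ case, $c = w_1^2 \neq 0$ gives pin$^-$, and the remaining $c$ comprise the ``neither'' case. Using the splitting of~\eqref{unor_SES} from Schur--Zassenhaus (since $\abs{K}$ is odd) and the hypothesis that $\Z/2$ acts trivially on $H^*(BK)$, I can compute $H^2(BG;\Z/2)$ from $H^*(B\Z/2;\Z/2)$ and $H^*(BK;\Z/2)^{\Z/2}$ and then list the relevant normal $2$-types up to $\Aut(G)$.

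Second, Kreck's theorem identifies stable diffeomorphism (resp.\ stable homeomorphism) classes with a fixed normal $2$-type $\xi\colon B \to B\O$ (resp.\ $B \to B\STop$) with orbits of $\Omega_4^{\xi}$ under the homotopy self-equivalences of $\xi$. Invoking \cref{Thom_simplification}, I identify the $4$-dimensional bordism groups with those for $\pi_1\cong\Z/2$, so the computations reduce to $\Omega_4^{\Pinp}(B\Z/2)$, $\Omega_4^{\Pinm}(B\Z/2)$, and an appropriately twisted unoriented bordism group for the ``neither'' case. I would carry these out with the Atiyah--Hirzebruch spectral sequence (and the Adams spectral sequence if the extensions are awkward). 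The topological versus smooth comparison is mediated by the Kirby--Siebenmann invariant, which provides the extra contributions responsible for the jump from $9,1,4$ (smooth) to $10,2,8$ (topological).

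The main obstacle I anticipate is the final stage: computing the action of the self-equivalence group of $\xi$ on each bordism group and performing the orbit count. Automorphisms of $G$ permute classes in $H^2(BG;\Z/2)$ and act nontrivially on the bordism groups. The pin$^-$ case (one smooth class) is likely the most delicate, since it says nearly the whole bordism group collapses under the symmetry action; I expect to verify it by exhibiting explicit generators such as $\RP^4$ or twisted $S^2$-bundles over $\RP^2$ and showing they are either null-bordant or related by a self-equivalence to the unique representative. The ``neither'' case will also require care, since its associated Thom spectrum is not a familiar pin-type spectrum and its bordism groups demand a bespoke calculation.
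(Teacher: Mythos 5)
Your high-level plan (Kreck's program, reduce to $\Z/2$ via \cref{Thom_simplification}, count $\Aut(\xi)$-orbits) matches the paper's, but there are two substantive gaps that would derail the argument as proposed.

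First, the ``neither'' case. You propose enumerating normal types by a class $c \in H^2(BG;\Z/2)$ recording $w_2$ and say ``the remaining $c$ comprise the neither case.'' But after you compute $H^2(BG;\Z/2) \cong \Z/2$ (which is forced by \cref{cohring} / Leray--Hirsch), the only options are $c = 0$ and $c = w_1^2$, so there is no ``remaining $c$'' --- yet the theorem asserts manifolds that are neither \pinp nor \pinm exist. The resolution is Kreck's dichotomy in \cref{krecks_examples}: the \pinp and \pinm manifolds are exactly the \emph{almost spin} ones, with normal $1$-type fibered over $B\Spin \times B\pi_1(M)$, whereas manifolds that are neither are the \emph{totally non-spin} ones, whose $w_2$ is \emph{not} pulled back from $BG$ at all, and whose normal $1$-type is fibered over $B\SO \times B\pi_1(M)$ instead. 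Your enumeration scheme misses this structurally different normal $1$-type entirely, which is why you anticipate the neither case as ``not a familiar pin-type spectrum'' demanding ``a bespoke calculation.'' It is in fact the most familiar one: $\MSO \wedge (B\Z/2)^{\sigma-1} \simeq \MO$ (Gray), so $\Omega_4^\xi \cong \Omega_4^\O \cong (\Z/2)^2$ by Thom, with trivial $\Aut(\xi)$-action, giving the four classes.

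Second, the pin cases. After the reduction to $\Z/2$, the bordism group $\Omega_4^\xi = \pi_4(\MSpin \wedge (B\Z/2)^{\sigma-1})$ (resp.\ $\pi_4(\MSpin \wedge (B\Z/2)^{3\sigma-3})$) is \emph{not} $\Omega_4^{\Pin^\pm}(B\Z/2)$, the pin bordism of the space $B\Z/2$, which is a larger group since the extra $(B\Z/2)_+$ factor is absent. The key input is the Peterson / Kirby--Taylor equivalence $\MSpin \wedge (B\Z/2)^{\sigma-1} \simeq \MTPin^-$ and $\MSpin \wedge (B\Z/2)^{3\sigma-3} \simeq \MTPin^+$ (\cref{smooth_pin_shear}), which identifies $\Omega_4^\xi$ with the \emph{absolute} pin bordism groups $\Omega_4^{\Pin^-} \cong 0$ and $\Omega_4^{\Pin^+} \cong \Z/16$. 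In particular the \pinm case is not ``delicate'' at all --- the bordism group is already zero, no self-equivalence analysis required. Without these spectrum-level identifications (and their topological analogues, built from the determinant map $\Top \to \{\pm 1\}$), the AHSS/Adams computations you sketch would both be computing the wrong groups and be much harder than necessary. The topological story then needs $\Omega_4^{\TopPin^\pm}$ from Kirby--Taylor and the computation $\Omega_4^{\Top} \cong (\Z/2)^3$; your remark about Kirby--Siebenmann is consistent with the paper's use of the triangulation obstruction as the extra invariant.
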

This is a combination of \cref{almost_spin_classes,non_spin_classes,top_almost_spin,top_no_spin}. In those theorems
we also determine complete stable diffeomorphism/homeomorphism invariants for these manifolds. The classification
for $M$ neither \pinp or \pinm can be extracted from work of Davis~\cite[Theorem 2.3]{Dav05}, but the other parts
are new.

We prove these theorems by establishing isomorphisms of bordism groups. Specifically, Kreck's modified surgery
theory associates to $G$ a set of symmetry types $\xi\colon B\to B\O$ and expresses the set of stable
diffeomorphism classes in terms of the bordism groups $\Omega_4^\xi$; we show that when $\abs G \equiv 2\bmod 4$
and the assumption about $H^*(K)$ holds, the Thom spectra of these symmetry types are homotopy equivalent to the
Thom spectra for unoriented, \pinp, and \pinm bordism. In the smooth case, the bordism groups $\Omega_4^\O$,
$\Omega_4^{\Pin^+}$, and $\Omega_4^{\Pin^-}$ are well-known. The topological versions of these bordism groups are
less well-known, but Kirby-Taylor~\cite[\S 9]{KT90} compute $\Omega_4^{\TopPin^\pm}$ and provide enough information
for us to compute $\Omega_4^{\Top}$, which we do in \cref{top_4_bordism}.

The argument we use to establish the isomorphism from $\xi$-bordism to a simpler kind of bordism applies to more
general choices of $\pi_1(M)$.
\begin{Thomthm}
Suppose $G$ is a finite group fitting into an extension
\begin{equation}
	\shortexact*[][\vp]{K}{G}{P},
\end{equation}
where $\abs K$ is odd and $P$ is a $2$-group, and suppose $P$ acts trivially on $H^*(BK)$. For any unorientable
virtual vector bundle $V\to BP$, $\vp$ induces an equivalence of Thom spectra $(BG)^{\vp^*V}\overset\simeq\to
(BP)^V$.
\end{Thomthm}
The Pontrjagin-Thom construction turns this equivalence into isomorphisms of bordism groups from the unorientable
symmetry types Kreck associates to $G$ to the unorientable symmetry types for $P$, which we can use to compute
stable diffeomorphism classes. The proof strongly requires the assumption that $V$ is unorientable; nothing like
this is true in the oriented case.

Our main theorem above covers the case $\abs G\equiv 2\bmod 4$. The next step would be to consider
$P\cong\Z/2\times\Z/2$ or $\Z/4$, which would suffice for many groups $G$ of order $4\bmod 8$. For these choices of
$P$, many of the needed bordism groups have already been computed in the literature for other applications. For
$P\cong\Z/4$, see Botvinnik-Gilkey~\cite[\S 5]{BG97}; for $P\cong\Z/2\times\Z/2$, see work of
Guo-Ohmori-Putrov-Wan-Wang~\cite[\S 7]{GOPWW18}, the author in~\cite[Appendix F]{KPMTD20} and~\cite[\S 4.4]{Deb21},
and Wan-Wang-Zheng~\cite[Appendix A]{WWZ20}.

We begin in \S\ref{s_BG} with a quick review of Kreck's theorem~\cite{Kre99} on stable diffeomorphism classes of
$4$-manifolds within a given $1$-type. In \S\ref{s_simplify}, we study the Thom spectra of unorientable vector
bundles over $BG$, where $G$ is a finite group, proving \cref{Thom_simplification}. In \S\ref{s_2mod4}, we
specialize to the case where $\abs G \equiv 2\bmod 4$, determining the three possible normal $1$-types and
computing the sets of stable diffeomorphism classes for them. We prove \cref{almost_spin_classes,non_spin_classes},
which together form the smooth part of the main theorem above. In \cref{fake_RP4}, we discuss an example: $\RP^4$
is homeomorphic but not stably diffeomorphic to Cappell-Shaneson's fake $\RP^4$. This fact was known to
Cappell-Shaneson~\cite{CS71, CS76} and the proof using Kreck's surgery theory is due to Stolz~\cite{Sto88}. In
\S\ref{s_topl}, we consider stable homeomorphism classes of topological manifolds with $\abs{\pi_1(M)}\equiv 2\bmod
4$, and prove \cref{top_almost_spin,top_no_spin}, which form the topological part of the main theorem above.
\subsection*{Acknowledgments}
I thank my advisor, Dan Freed, for his constant help and guidance. I also would like to thank Matthias Kreck,
Riccardo Pedrotti, and Oscar Randal-Williams for some helpful conversations related to this paper.

A portion of this work was supported by the National Science Foundation under Grant No.\ 1440140 while the author
was in residence at the Mathematical Sciences Research Institute in Berkeley, California, during January--March
2020.


\section{Review: normal $1$-types, normal $1$-smoothings, and stable diffeomorphism classes}
	\label{s_BG}
We review some standard definitions in this area. We will always assume our manifolds are closed and connected.
Except in \S\ref{s_topl}, we also assume they are smooth.
\begin{defn}
A \term{normal $1$-type} of a manifold $M$ is a fibration $\xi\colon B\to B\O$ such that there is a lift of the map
$\nu\colon M\to BO$ classifying the stable normal bundle of $M$ to a map $\widetilde\nu\colon M\to B$ such that
$\xi\circ\widetilde\nu = \nu$, $\widetilde\nu$ is $2$-connected, and $\xi$ is $2$-coconnected.

A choice of such a lift is called a \term{normal $1$-smoothing} of $M$.
\end{defn}
Any two normal $1$-types of a given manifold are homotopy equivalent as spaces over $B\O$, so we will abuse
notation and say ``the'' normal $1$-type.

The map $\xi\colon B\to B\O$ determines a bordism theory of manifolds with a lift of the stable normal bundle
across $\xi$, which we denote $\Omega_*^\xi$; a normal $1$-smoothing of $M$ determines a class in this bordism
group. Different normal smoothings of the same manifold do not always define the same class in $\Omega_*^\xi$.

Let $V_\SO\to B\SO$, $V_\Spin\to B\Spin$, etc., denote the tautological stable vector bundles over their respective
spaces. We use the convention that maps to $B\O$ are represented by rank-zero virtual vector bundles, which is why
we write $E - \dim E$ in~\eqref{alspineq}, for example.
\begin{exm}[{Kreck~\cite[\S 2, Proposition 2]{Kre99}}]\let\qed\relax
\label{krecks_examples}
When $M$ is unorientable, Kreck classifies the possible normal $1$-types of $M$ into two families. Let $M'\to M$ be
the universal cover of $M$, which is classified by a map $\theta\colon M\to B\pi_1(M)$.
\begin{description}
	\item[Almost spin] If $M'$ admits a spin structure, $M$ is called \term{almost spin}. In this case, $w_1(M) =
	\theta^*x_1$ and $w_2(M) = \theta^*x_2$ for some $x_1,x_2\in H^*(BG;\Z/2)$. Assume there is a vector bundle
	$E\to BG$ such that $w_i(E) = x_i$ for $i = 1,2$.\footnote{This will be true for all cases we consider in this
	paper, but is not true in general.} Then, the normal $1$-type of $M$ is
	\begin{equation}
	\label{alspineq}
	\begin{gathered}
		\xymatrix{
			& B\Spin\times B\pi_1(M)\ar[d]^{V_\Spin\oplus (E-\dim E)}\\
			M\ar[r]_-\nu\ar[ur] & B\O.
		}
	\end{gathered}
	\end{equation}
	\item[Totally non-spin] If $M'$ does not admit a spin structure, $M$ is called \term{totally non-spin}. In this
	case, $w_1(M) = \theta^*x$ for some $x\in H^1(BG;\Z/2)$. Let $E\to BG$ be a line bundle with $w_1(E) = x$. Then
	the normal $1$-type of $M$ is
	\begin{equation}
	\begin{gathered}
		\xymatrix{
			& B\SO\times B\pi_1(M)\ar[d]^{V_\SO\oplus (E-1)}\\
			M\ar[r]_-\nu\ar[ur] & B\O.
		}
	\end{gathered}
	\end{equation}
\end{description}
\end{exm}
Because $S^2\times S^2$ has trivial stable normal bundle, taking connect sum with $S^2\times S^2$ does not change
the normal $1$-type of a $4$-manifold; thus the classification of $4$-manifolds up to stable diffeomorphism
can proceed one normal $1$-type at a time. Moreover, because $S^2\times S^2$ is null-bordant, one might
conclude that stably diffeomorphic $4$-manifolds $M$ and $N$ are bordant --- or, more precisely, that $M$ and $N$
admit normal $1$-smoothings which are bordant in $\Omega_4^\xi$. So a plausible lower bound for the set of stable
diffeomorphism classes with normal $1$-type $\xi$ would be $\Omega_4^\xi$ modulo some identifications arising from
inequivalent normal $1$-smoothings of the same underlying manifold. Remarkably, this turns out to be a complete
classification!
\begin{thm}[{Kreck~\cite[Theorem C; \S 3, Proposition 4]{Kre99}}]\hfill
\label{its_bordism}
\begin{enumerate}
	\item If $M$ and $N$ are $4$-manifolds of the same normal $1$-type $\xi$ admitting normal $1$-smoothings
	which are bordant in $\Omega_4^\xi$, then $M$ is stably diffeomorphic to $N$.
	\item If $\pi_1(\xi)$ is finite, every class in $\Omega_4^\xi$ can be realized as the normal $1$-smoothing of
	a $4$-manifold with normal $1$-type $\xi$.
\end{enumerate}
The upshot is that if $\Aut(\xi)$ denotes the group of fiber homotopy equivalences of $\xi\to B\O$, the set of
stable diffeomorphism classes of $4$-manifolds with normal $1$-type $\xi$ is $\Omega_4^\xi/\Aut(\xi)$.
\end{thm}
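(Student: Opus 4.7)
The approach is Kreck's modified surgery, specialized to dimension $4$. For Part (1), start with a $\xi$-bordism $W^5$ between $(M,\tilde\nu_M)$ and $(N,\tilde\nu_N)$. The plan is to modify $W$ by surgery in its interior (keeping the boundary fixed) until it exhibits a stable diffeomorphism between $M$ and $N$. First perform surgeries on $0$- and $1$-handles inside $W$ to make the classifying map $W \to B$ highly connected: because $\xi$ is $2$-coconnected the fiber of $\xi$ has $\pi_i=0$ for $i \ge 2$, so the only obstructions live in $\pi_0$ and $\pi_1$ of the homotopy fiber of $W \to B$, and these can be killed. Combined with the fact that $\tilde\nu_M$ and $\tilde\nu_N$ are already $2$-connected on the boundary, this produces a bordism with $\pi_i(W,M)=0$ for $i \le 2$.

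The harder step is surgery at the middle dimension. On a $5$-dimensional bordism $W$ between $4$-manifolds, the next-to-kill group is $\pi_3(W,M)$, which by Hurewicz is the equivariant homology group $H_3(W,M;\Z[\pi_1])$. Classes here are represented by immersed $2$-spheres whose normal bundles are trivial thanks to the $\xi$-structure, so one can try to cancel them by further surgery. The obstruction is a stable $\Z[\pi_1]$-valued quadratic form on $H_3(W,M)$, and this is where stabilization enters: each additional $S^2\times S^2$-summand attached to $M$ or $N$ contributes a hyperbolic plane to this form, so after enough stabilizations all generators are paired off and cancelled. The resulting bordism is trivial enough that, via Wall's stable $h$-cobordism theorem for closed $4$-manifolds, one obtains a diffeomorphism between the stabilized manifolds. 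This middle-dimensional cancellation is the main obstacle; everything below the middle is routine once $\xi$ is $2$-coconnected.

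For Part (2), represent a class in $\Omega_4^\xi$ by a closed $\xi$-manifold $(X,\tilde\nu)$; to ensure $\xi$ really is the normal $1$-type of $X$, we need to arrange that $\tilde\nu$ is $2$-connected. The obstructions lie in $\pi_0$ and $\pi_1$ of the homotopy fiber of $\tilde\nu$; connectedness of $X$ and $B$ handles the former, and the latter is killed by surgery along loops in $X$ representing the kernel of $\pi_1(X)\to\pi_1(B)$. The hypothesis $\abs{\pi_1(\xi)}<\infty$ ensures this kernel is finitely generated, so only finitely many surgeries suffice, each preserving the $\xi$-bordism class. The upshot follows formally: stably diffeomorphic manifolds have bordant normal $1$-smoothings (since $S^2\times S^2$ is $\xi$-null-bordant), different normal $1$-smoothings of a fixed manifold differ by the action of $\Aut(\xi)$, and combining this with Parts (1) and (2) gives the bijection between stable diffeomorphism classes and $\Omega_4^\xi/\Aut(\xi)$.
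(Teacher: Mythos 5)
The paper does not prove this result; it cites Kreck~\cite[Theorem~C; \S 3, Proposition~4]{Kre99} and takes the statement as a black box, so there is no ``paper's proof'' to compare your sketch against. As an outline of Kreck's modified surgery argument your sketch is in the right spirit: low-dimensional surgery to make $W\to B$ highly connected, then a middle-dimensional obstruction that stabilization is supposed to absorb. However, two points are off. First, your explanation of where the hypothesis $\abs{\pi_1(\xi)}<\infty$ enters part~(2) is not right: for a compact representative $X$, the kernel of $\pi_1(X)\to\pi_1(B)$ is automatically finitely normally generated, independent of any finiteness of $\pi_1(B)$, so that is not what the hypothesis is buying you. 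The finiteness of $\pi_1(B)$ is used elsewhere in Kreck's argument (Kreck's Proposition~4 is stated with it as a hypothesis precisely because the construction of a $2$-connected representative in dimension $2q$ is delicate when $n=2q$ is not strictly above $2(q-1)+2$; it is not a finite-generation issue for the kernel). Second, the middle-dimensional step is more subtle than ``add hyperbolic summands until everything cancels.'' Kreck's obstruction lives in the monoid $l_{2q+1}(\Z[\pi_1],w)$, not a Witt group, and the passage from a $B$-bordism to a stable diffeomorphism is his Theorem~3/Theorem~C machinery, which hinges on the notion of an \emph{elementary} obstruction and the subtraction of half-rank subkernels rather than a straightforward hyperbolic cancellation; moreover the Euler-characteristic bookkeeping (stabilizing both sides so $\chi(M)=\chi(N)$) is an explicit ingredient. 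Your invocation of ``Wall's stable $h$-cobordism theorem for closed $4$-manifolds'' is also not the mechanism Kreck uses: his surgery on $W$ directly produces the diffeomorphism after stabilization rather than passing through an $s$-cobordism. None of this affects the paper, which only uses the statement, but if you intend this as a self-contained proof these gaps would need to be filled by following Kreck's actual argument.
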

The set of bordism classes of normal $1$-smoothings of a given $4$-manifold is contained within an
$\Aut(\xi)$-orbit of $\Omega_4^\xi$, so one effect of the quotient is to identify these as all coming from the same
manifold.

This illustrates the standard way to calculate stable diffeomorphism classes: determine $\Omega_4^\xi$, then
determine the $\Aut(\xi)$-action. These bordism groups are the homotopy groups of the Thom spectrum $M\xi$ of
$\xi$, so in the next section we begin the calculation of stable diffeomorphism classes by simplifying $M\xi$.

\section{Simplifying Thom spectra}
	\label{s_simplify}
\Cref{its_bordism} tells us to investigate the Thom spectra of the normal $1$-types in \cref{krecks_examples}. In
both cases, the vector bundle is an exterior direct sum, so the Thom spectra split, as $\MSpin\wedge (B\pi_1(M))^V$
in the almost spin case and $\MSO\wedge (BG)^V$ in the totally non-spin case, where $V$ is a rank-zero unoriented
virtual vector bundle. We attack the problem by simplifying $(B\pi_1(M))^V$ for some choices of $\pi_1(M)$.

\begin{thm}
\label{Thom_simplification}
Suppose $G$ is a finite group fitting into an extension
\begin{equation}
\label{G_as_SES}
	\shortexact*[][\vp]{K}{G}{P},
\end{equation}
where $\abs K$ is odd and $P$ is a $2$-group, and suppose $P$ acts trivially on $H^*(BK)$. For any unorientable
virtual vector bundle $V\to BP$, $\vp$ induces an equivalence of Thom spectra $(BG)^{\vp^*V}\overset\simeq\to
(BP)^V$.
\end{thm}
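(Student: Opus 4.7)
The plan is to show that $\vp$ induces an isomorphism $f\colon (BG)^{\vp^*V}\to (BP)^V$ on integral homology; since both spectra are connective (the underlying virtual bundles have rank zero), Whitehead's theorem will then imply $f$ is a weak equivalence of spectra. By a universal-coefficient argument applied to the mapping cone of $f$, it suffices to check isomorphism on $H_*(-;\mathbb{F}_p)$ for every prime $p$ and on $H_*(-;\Q)$. The main tools will be the Thom isomorphism with twisted coefficients in the orientation local system of $V$, together with the Serre spectral sequence for the fibration $BK\to BG\xrightarrow{B\vp} BP$ induced by $\vp$.

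At the prime $2$, I will use that $|K|$ is odd to see that $BK$ is mod-$2$ acyclic above degree zero, so the Serre spectral sequence with $\mathbb{F}_2$-coefficients collapses to its $q=0$ row and $B\vp$ induces $H_*(BG;\mathbb{F}_2)\cong H_*(BP;\mathbb{F}_2)$. Because every vector bundle is canonically mod-$2$ oriented, the Thom isomorphism is natural and untwisted, and this upgrades to the required mod-$2$ equivalence for $f$. At an odd prime $\ell$ or rationally, the strategy is different: I will show that both Thom spectra are acyclic. Let $w\colon P\to\Z/2$ be the orientation character of $V$ and $w'=w\circ\vp$; both are nontrivial because $V$ is unorientable. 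The Thom isomorphism identifies $\widetilde H_*((BP)^V;\mathbb{F}_\ell)$ with $H_*(BP;\mathbb{F}_\ell^w)$ and gives an analogous identity over $\Q$. Since $P$ is a $2$-group, $|P|$ is invertible in $\mathbb{F}_\ell$ for odd $\ell$ and in $\Q$, so $H_p(BP;M)$ vanishes for $p>0$ and equals the coinvariants $M_P$ in degree zero; for $M=\mathbb{F}_\ell^w$ the coinvariants vanish because $w$ is nontrivial and $2$ is a unit. For $(BG)^{\vp^*V}$, I will run the Serre spectral sequence with the pulled-back local system $\mathbb{F}_\ell^{w'}$: since $K=\ker\vp$, this system restricts trivially to the fiber $BK$, and the hypothesis that $P$ acts trivially on $H^*(BK)$ forces the $P$-action on $\mathbb{F}_\ell^w\otimes H_q(BK;\mathbb{F}_\ell)$ to factor through $w$ alone. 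Hence $E^2_{p,q}=H_p(BP;\mathbb{F}_\ell^w)\otimes H_q(BK;\mathbb{F}_\ell)=0$ by the preceding vanishing, so $(BG)^{\vp^*V}$ is also acyclic.

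The main obstacle is the bookkeeping in the twisted Serre spectral sequence above: the local system $\mathbb{F}_\ell^{w'}$ on $BG$ and the monodromy of the fibration $BK\to BG\to BP$ both produce $P$-actions on the $E^2$-page, and the trivial-action hypothesis on $H^*(BK)$ is needed precisely to separate these so that only the orientation twist survives. Without that hypothesis, the monodromy could twist $H^*(BK;\mathbb{F}_\ell)$ in a way that cancels the orientation twist in individual degrees, making the vanishing fail; this is the feature that makes the result genuinely unorientable, with no analogue in the oriented setting.
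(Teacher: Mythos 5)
Your proposal is correct and follows essentially the same strategy as the paper's proof: translate via the Thom isomorphism into (co)homology of $BG$ and $BP$ with the orientation local system, run the Serre/Lyndon--Hochschild--Serre spectral sequence for the extension, observe that the orientation twist together with $|P|$ being a power of $2$ kills everything away from the prime $2$, handle the prime $2$ via Leray--Hirsch/collapse using that $BK$ is mod-$2$ acyclic, and conclude with the homology Whitehead theorem for bounded-below spectra. The only cosmetic difference is that you check vanishing prime-by-prime (over $\mathbb{F}_\ell$ and $\Q$ directly), whereas the paper first proves the reduced integral cohomology of both Thom spectra is $2$-torsion and then invokes universal coefficients; the mathematical content is the same.
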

We'll prove this in a series of lemmas.
\begin{defn}
\label{locsysdef}
Let $H$ be a group, $A$ be an abelian group, and $\alpha\in H^1(BH;\Z/2)$. Using the identification
$H^1(BH;\Z/2)\cong\Hom(H, \Z/2)$, let $A_\alpha$ be the $\Z[H]$-module which is the abelian group $\Z$ with the
$H$-action in which $g\in H$ acts by $(-1)^{\alpha(g)}$.
\end{defn}
\begin{lem}
\label{locsyscoh}
In the situation of \cref{Thom_simplification}, both $\widetilde H^*((BG)^{\vp^*V})$ and $\widetilde H^*((BP)^V)$
are $2$-torsion.
\end{lem}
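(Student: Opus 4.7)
The plan is to apply the Thom isomorphism with twisted integer coefficients, reducing both statements to computations in group cohomology, and then combine a transfer argument with the Serre spectral sequence of the fibration $BK \to BG \to BP$ coming from~\eqref{G_as_SES}.

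First I would invoke the twisted Thom isomorphism: since $V$ and $\vp^* V$ are rank-zero virtual vector bundles, it gives
\begin{equation*}
    \widetilde H^n((BP)^V) \cong H^n(BP;\Z_{w_1(V)}), \qquad \widetilde H^n((BG)^{\vp^*V}) \cong H^n(BG;\Z_{\vp^*w_1(V)}),
\end{equation*}
with $\Z_\alpha$ the local system of~\cref{locsysdef}. It therefore suffices to show that both of these group cohomology groups are $2$-torsion.

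For $(BP)^V$ this is the standard transfer argument: for $n>0$ the cohomology $H^n(BP;M)$ is annihilated by $|P|$ for any coefficient module $M$, and $|P|$ is a power of $2$. In degree zero, the unorientability of $V$ makes $w_1(V)\colon P\to\Z/2$ nontrivial, which forces $\Z_{w_1(V)}^P=0$.

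For $(BG)^{\vp^* V}$ I would run the Serre spectral sequence of $BK\to BG\to BP$ with coefficients in $\Z_{\vp^* w_1(V)}$. Since this local system is pulled back from $BP$ and $\vp^* w_1(V)=w_1(V)\circ\vp$ vanishes on $K=\ker\vp$, it restricts trivially to the fiber $BK$; combined with the hypothesis that $P$ acts trivially on $H^*(BK)$, this identifies
\begin{equation*}
    E_2^{p,q} = H^p\bigl(BP;\, H^q(BK;\Z)\otimes\Z_{w_1(V)}\bigr),
\end{equation*}
where the $P$-module structure is just via the sign action on $\Z_{w_1(V)}$. The $q=0$ row reduces to the previous case and is $2$-torsion; for $q>0$, $H^q(BK;\Z)$ has odd order (since $|K|$ is odd) while the cohomology is $|P|$-torsion by transfer, and the coprimality of $|K|$ and $|P|$ forces these entries to vanish. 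Hence $H^*(BG;\Z_{\vp^* w_1(V)})$ is $2$-torsion as well.

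The main subtlety is the local-system bookkeeping --- specifically identifying the $E_2$ page as written. The two ingredients needed (trivial restriction of the pulled-back local system to the fiber, and trivial monodromy of the fiberwise cohomology $\mathcal{H}^q(BK;\Z)$ over $BP$) are immediate from $K=\ker\vp$ and from the trivial-action hypothesis, so this should not be a serious obstacle.
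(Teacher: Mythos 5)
Your overall strategy is the same as the paper's: apply the twisted Thom isomorphism to reduce to group cohomology, then run the Lyndon--Hochschild--Serre (Serre) spectral sequence for $BK\to BG\to BP$ with coefficients in $\Z_{\vp^*w_1(V)}$. The direct argument for $(BP)^V$ and the identification of the $E_2$-page are both correct and match the paper.

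There is, however, a genuine gap in your treatment of the rows $q>0$. Your claim is that ``the cohomology is $|P|$-torsion by transfer, and the coprimality of $|K|$ and $|P|$ forces these entries to vanish.'' Transfer gives $|P|$-torsion only for $H^p(BP;M_q)$ with $p\ge 1$; it says nothing about $p=0$. So coprimality kills $E_2^{p,q}$ for $p\ge 1$, $q>0$, but leaves the column $E_2^{0,q}=M_q^P$ untouched --- and an odd-order $\Z[P]$-module can certainly have nonzero $P$-invariants (e.g.\ if $P$ acts trivially), in which case a nonzero odd-torsion group would survive to $E_\infty$ and the conclusion would fail. The paper closes exactly this case: since $w_1(V)\ne 0$, some $g\in P$ acts on $\Z_{w_1(V)}$, and hence on $M_q = H^q(BK;\Z)_{w_1(V)}$, by $-1$ (using the hypothesis that $P$ acts trivially on $H^*(BK)$); any invariant $a$ then satisfies $2a=0$, and since $M_q$ is odd torsion, $M_q^P=0$. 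You in fact made precisely this sign argument in degree zero for the $(BP)^V$ case, so you have the ingredient in hand --- it just needs to be applied to every $E_2^{0,q}$, not only to $q=0$. Note this is where the unorientability of $V$ enters the $q>0$ part of the argument, so it cannot be dispensed with.
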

\begin{proof}
Using \cref{locsysdef}, we define the $\Z[P]$-module $A_{w_1(V)}$ and the $\Z[G]$-module $A_{w_1(\vp^*P)}$, which
is isomorphic to the pullback of $A_{w_1(V)}$ by $\vp$. The Thom isomorphism provides isomorphisms of graded
abelian groups
\begin{subequations}
\begin{align}
	H^*(BP; \Z_{w_1(V)}) &\overset\cong\longrightarrow\widetilde H^*((BP)^V)\\
	H^*(BG;\Z_{w_1(\vp^*V)}) &\overset\cong\longrightarrow\widetilde H^*((BG)^{\vp^*V}),
\end{align}
\end{subequations}
so we will prove the lemma using group
cohomology -- specifically, using the Lyndon-Hochschild-Serre spectral sequence
\begin{equation}
\label{the1stLHS}
	E_2^{p,q} = H^p(BP; (H^q(BK;\Z))_{w_1(V)}) \Longrightarrow H^{p+q}(BG; \Z_{w_1(\vp^*V)}).
\end{equation}
Here it is crucial that $P$ acts trivially on $H^*(BK)$; otherwise we would have a different local coefficient
system than $H^q(BK;\Z)_{w_1(V)}$ in~\eqref{the1stLHS}.

Since $E_2^{p,q}\cong H^p(BP; M_q)$ for some $\Z[P]$-module $M_q$, $E_2^{p,q}$ is $2$-torsion for $p > 1$ by
Maschke's theorem.\footnote{We use Maschke's theorem as follows: if $G$ is a finite group and $k$ is a field of
characteristic $0$ or characteristic $\ell\nmid \#G$, the category of $k[G]$-modules is semisimple. Therefore all
positive-degree Ext groups vanish, in particular $H^m(BG; M)\cong\Ext_{k[G]}^m(\Z, M)$ for any $k[G]$-module $M$
and $m > 1$. Combined with the universal coefficient theorem, this implies that for any $\Z[G]$-module $M$ and $m >
1$, $H^m(G; M)$ is torsion ($k = \Q$), and lacks $\ell$-torsion if $\ell\nmid \#G$.} When $p = 0$,
\begin{equation}
	E_2^{0,q}\cong H^0(BP; H^q(BK)_{w_1(V)})\cong (H^q(BK)_{w_1(V)})^P.
\end{equation}
We will show this vanishes. First, $H^q(BK)$ is $\Z$ for $q = 0$ and is odd-primary torsion for $q > 0$ (by
Maschke's theorem, because $2\nmid\#K$). Therefore if $a\in H^q(BK)$ and $-a = a$, $a = 0$.  Since $w_1(V)\ne 0$,
there is some $g\in P$ which acts on $\Z_{w_1(V)}$ as $-1$, hence also acts on $H^q(BK)_{w_1(V)}$ as $-1$, so the
subgroup of invariants of $H^q(BK)_{w_1(V)}$ is $\set 0$.

Considering the line $q = 0$ proves $H^*(BP;\Z_{w_1(V)})$ is $2$-torsion. For $H^*(BG;\Z_{w_1(\vp^*V)})$, we have
shown the $E_2$-page is $2$-torsion, so the graded abelian group the spectral sequence converges to is also
$2$-torsion.
\end{proof}
\begin{lem}
\label{cohring}
With $G$ and $P$ as in \cref{Thom_simplification}, $\vp^*\colon H^*(BP;\Z/2)\to H^*(BG;\Z/2)$ is an isomorphism of
graded rings.
\end{lem}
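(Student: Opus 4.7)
The plan is to apply the Lyndon-Hochschild-Serre spectral sequence with $\Z/2$-coefficients associated to the extension~\eqref{G_as_SES}, namely
\begin{equation}
	E_2^{p,q} = H^p(BP; H^q(BK;\Z/2)) \Longrightarrow H^{p+q}(BG;\Z/2).
\end{equation}
The hypothesis that $P$ acts trivially on $H^*(BK)$ will ensure we have untwisted coefficients on the $E_2$-page (though as noted below, this turns out to be moot mod $2$).

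First I would compute the fiber cohomology $H^*(BK;\Z/2)$. Since $\abs K$ is odd, Maschke's theorem, applied just as in the proof of \cref{locsyscoh}, shows that $H^q(BK;\Z/2) = 0$ for $q > 0$; meanwhile $H^0(BK;\Z/2) = \Z/2$. Therefore the $E_2$-page is concentrated in the row $q = 0$, where it equals $H^p(BP;\Z/2)$. For degree reasons all differentials vanish and there are no extension problems, so the edge homomorphism
\begin{equation}
	E_2^{p,0} = H^p(BP;\Z/2) \longrightarrow H^p(BG;\Z/2)
\end{equation}
is an isomorphism of graded abelian groups, and this edge homomorphism is exactly $\vp^*$.

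To upgrade this to a ring isomorphism, I would invoke the fact that the Lyndon-Hochschild-Serre spectral sequence is a spectral sequence of graded-commutative algebras when the coefficients form a commutative ring, and that the edge homomorphism is then a ring map. Since $\vp^*$ is a ring map which is also an isomorphism of the underlying graded abelian groups, it is an isomorphism of graded rings.

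I do not anticipate any real obstacle: the main content is the vanishing of $H^{>0}(BK;\Z/2)$, which is immediate from the oddness of $\abs K$, and the rest is a standard degeneration argument for the LHS spectral sequence. One small point worth remarking on is that the trivial-action hypothesis from \cref{Thom_simplification} is not actually used here, because all positive-degree fiber cohomology vanishes mod $2$ regardless of the $P$-action.
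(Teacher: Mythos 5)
Your proof is correct and is essentially the same argument as the paper's. The paper invokes the Leray-Hirsch theorem for the fibration $BK \to BG \to BP$, which in this degenerate case (fiber with trivial mod-$2$ cohomology) is exactly the collapse of the Lyndon-Hochschild-Serre / Serre spectral sequence that you spell out; the multiplicativity of the spectral sequence and the observation that the trivial-action hypothesis is not needed here are both correct.
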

\begin{proof}
Since $K$ has odd order, its mod $2$ cohomology is $\Z/2$ in degree $0$ and vanishes elsewhere, so the result
follows from the Leray-Hirsch theorem applied to the fibration $BK \to BG\to BP$ induced by~\eqref{G_as_SES}.
\end{proof}
\begin{proof}[Proof of \cref{Thom_simplification}]
Use the homology Whitehead theorem: if $f\colon X\to Y$ is a map of bounded-below spectra which induces an
isomorphism on rational cohomology and on mod $p$ cohomology for every prime $p$, then $f$ is a homotopy
equivalence. \Cref{locsyscoh} and the universal coefficient theorem imply that if $k = \Q$ or $k = \Z/p$ for an odd
prime $p$, $\widetilde H^*((BG)^{\vp^*V};k)$ and $\widetilde H^*((BP)^V; k)$ both vanish, so the map between them
is vacuously an isomorphism. The sole remaining case is $p = 2$. Since $1\equiv -1\bmod 2$, $(\Z/2)_{w_1(V)}$
carries the trivial $P$-action; thus, the Thom isomorphism has the form
\begin{subequations}
\begin{equation}
	H^*(BP;\Z/2)\overset\cong\longrightarrow \widetilde H^*((BP)^V;\Z/2).
\end{equation}
Analogously, there is a Thom isomorphism
\begin{equation}
	H^*(BG;\Z/2))\overset\cong\longrightarrow \widetilde H^*((BG)^{\vp^*V};\Z/2).
\end{equation}
\end{subequations}
As the Thom isomorphism is functorial with respect to pullbacks of vector bundles, \cref{cohring} lifts to imply
that
\begin{equation}
	\vp^*\colon \widetilde H^*((BP)^V;\Z/2)\longrightarrow \widetilde H^*((BG)^{\vp^*V};\Z/2)
\end{equation}
is an isomorphism.
\end{proof}

\section{The case $\abs{\pi_1(X)}\equiv 2\bmod 4$}
	\label{s_2mod4}
If $M$ is an unorientable manifold, the description of loops as orientation-preserving or orientation-reversing
defines a surjection $p\colon \pi_1(M)\to\Z/2$, so $\pi_1(M)$ cannot have odd order. Thus the simplest case occurs
when $\abs{\pi_1(M)}\equiv 2\bmod 4$, so that $\abs{\ker(p)}$ is odd. For the rest of this section, fix such a
group $G$, and assume that $\Z/2$ acts trivially on $H^*(B\ker(p))$.

In this case, \cref{Thom_simplification} applies to show that if $\Z/2$ acts trivially on $H^*(B\ker(p))$ and $V\to
B\Z/2$ is any unorientable virtual vector bundle, the map $(B\pi_1(M))^{p^*V}\overset\simeq\to (B\Z/2)^V$ is an
equivalence.

Let $\sigma\to B\Z/2$ denote the tautological line bundle and $x\coloneqq w_1(\sigma)\in H^1(B\Z/2;\Z/2)$, so
$H^*(B\Z/2; \Z/2) \cong \Z/2[x]$. Because $\ker(p)$ has odd order, the Leray-Hirsch theorem implies $p^*\colon
H^*(B\Z/2;\Z/2)\to H^*(B\pi_1(M);\Z/2)$ is an isomorphism.
\subsection{The almost spin case}
\Cref{krecks_examples} shows there are two unorientable normal $1$-types in this case: $w_1(\nu)\ne 0$, so
it must be the pullback of $p^*x\in H^1(B\pi_1(M);\Z/2)$, and for $w_2$, we have two choices: $w_2 = 0$ (the normal
bundle is \pinp) and $w_2 = p^*x^2$ (the normal bundle is \pinm).

Recall that for a manifold $M$, $M$ is pin\textsuperscript{$\pm$} (i.e.\ the tangent bundle is
pin\textsuperscript{$\pm$}) iff the normal bundle is pin\textsuperscript{$\mp$}. A (tangential) \pinp $4$-manifold
$M$ has a $\Z/16$-valued invariant given by the $\eta$-invariant of a twisted Dirac operator~\cite[\S 4]{Sto88};
let $\eta'$ be the invariant assigning to a \pinp $4$-manifold $M$ the image of this $\eta$-invariant in the
nine-element set $(\Z/16)/(x\sim -x)$. We will see in the proof of \cref{almost_spin_classes} that all \pinp
structures on $M$ give the same value of $\eta'$, so we may define it as an invariant of manifolds which admit a
\pinp structure, without choosing such a structure.
\begin{thm}
\label{almost_spin_classes}
There are nine stable diffeomorphism classes of unorientable $4$-manifolds with $\pi_1(M)\cong G$ that admit a
(tangential) \pinp structure, and there is a single stable diffeomorphism class of manifolds with $\pi_1(M)\cong G$
that admit a (tangential) \pinm structure. In the \pinp case, $\eta'$ is a complete stable diffeomorphism
invariant.
\end{thm}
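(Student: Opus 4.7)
The plan is to apply Kreck's theorem (\cref{its_bordism}) to each of the two almost-spin normal $1$-types described just above the statement: the one with $w_2(\xi)=0$, which corresponds to tangentially \pinm manifolds, and the one with $w_2(\xi)=p^*x^2$, which corresponds to tangentially \pinp manifolds. For each, the set of stable diffeomorphism classes is $\Omega_4^\xi/\Aut(\xi)$; the first task is to compute $\Omega_4^\xi$, the second is to understand the $\Aut(\xi)$-action.

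To compute $\Omega_4^\xi$, I would apply \cref{Thom_simplification} to the extension $1\to \ker(p)\to G\to \Z/2\to 1$ with $P = \Z/2$; the hypothesis that $P$ acts trivially on $H^*(B\ker(p))$ is the standing assumption of \S\ref{s_2mod4}. In both cases the Thom spectrum splits as $M\xi\simeq\MSpin\wedge(B\pi_1(M))^{p^*V}$ for an unorientable virtual bundle $V\to B\Z/2$ of the appropriate Stiefel-Whitney type, and \cref{Thom_simplification} identifies this with $\MSpin\wedge(B\Z/2)^V$. For the correct $V$ in each case this is the (normal) \pinp or \pinm Thom spectrum. By the identification of normal and tangential pin-structures for $4$-manifolds, $\Omega_4^\xi$ is then the dimension-$4$ tangential \pinm or tangential \pinp bordism group. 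By Kirby-Taylor, the former vanishes (giving the single stable diffeomorphism class in the tangentially \pinm case) and the latter is $\Z/16$, generated by $\RP^4$, with isomorphism provided by the $\eta$-invariant of Stolz.

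It then remains to analyze the $\Aut(\xi)$-action on $\Z/16$. The group $\Aut(\xi)$ of fibre homotopy self-equivalences of $\xi\colon B\to B\O$ acts on $\Omega_4^\xi$ by reparameterizing normal $1$-smoothings, and after the Thom simplification this passes to an action on $\MSpin\wedge(B\Z/2)^V$ via self-equivalences of $(B\Z/2)^V$. I would argue that, modulo automorphisms that act trivially on bordism, the only nontrivial contribution is the $\Z/2$ coming from reversing the \pinp structure on the universal generator $\RP^4$, which by Stolz's sign formula acts on $\eta$ by negation. The orbits of $x\mapsto -x$ on $\Z/16$ are the nine classes $\set{0}, \set{\pm 1}, \ldots, \set{\pm 7}, \set{8}$, exactly the nine-element quotient $(\Z/16)/(x\sim -x)$ in which $\eta'$ takes values; this proves the count of nine stable diffeomorphism classes, verifies that $\eta'$ is independent of the chosen \pinp structure, and identifies it as a complete invariant. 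The main obstacle will be pinning down this $\Aut(\xi)$-action: tracking a fibre self-equivalence of $\xi$ through the chain of equivalences furnished by \cref{Thom_simplification} and the Thom isomorphism down to a concrete action on $\Z/16$, and then verifying via Stolz's computation on $\RP^4$ that the induced action is negation.
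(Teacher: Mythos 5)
Your proposal is correct and follows essentially the same architecture as the paper: split the Thom spectrum of each almost-spin normal $1$-type as $\MSpin\wedge (B\pi_1(M))^{p^*V}$, apply \cref{Thom_simplification} to collapse $B\pi_1(M)$ to $B\Z/2$, invoke the Peterson--Kirby--Taylor shearing equivalence to identify the result with $\MTPin^\mp$, and then use $\Omega_4^{\Pin^-}=0$ and $\Omega_4^{\Pin^+}\cong\Z/16$ together with an $\Aut(\xi)$-analysis to arrive at one class and nine classes respectively. The one place where the paper's route is lighter than what you sketch is the $\Aut(\xi)$-step, which you flag as the ``main obstacle'': rather than tracking a fibre self-equivalence of $\xi$ explicitly through the chain of Thom-spectrum equivalences and computing its effect on $\Z/16$, the paper observes that any $g\in\Aut(\xi)$ sends the normal $1$-smoothing of $\RP^4$ to another normal $1$-smoothing of $\RP^4$, and Kirby--Taylor's isomorphism sends the two \pinp structures on $\RP^4$ to $\pm1$; since $[\RP^4]$ generates $\Z/16$, any such $g$ must act by $\pm1$ on the whole group. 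This soft generator-tracking argument avoids the explicit unwinding you anticipate and is worth adopting if you write up the proof in full.
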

\begin{proof}
Both choices of $(w_1, w_2)$ arise from vector bundles: $(p^*x, 0)$ from $p^*\sigma$, and $(p^*x, p^*x^2)$ from
$p^*(3\sigma)$. Thus the normal $1$-types are
\begin{subequations}
\begin{align}
	V_{\Spin}\oplus (p^*\sigma - 1)\colon B\Spin\times B\pi_1(M) &\longrightarrow B\O\\
	V_{\Spin}\oplus (p^*(3\sigma) - 3)\colon B\Spin\times B\pi_1(M) &\longrightarrow B\O,
\end{align}
\end{subequations}
and their Thom spectra are $\MSpin\wedge (B\pi_1(M))^{p^*\sigma-1}$, resp.\ $\MSpin\wedge
(B\pi_1(M))^{p^*(3\sigma)-3}$. By \cref{Thom_simplification}, these are equivalent to $\MSpin\wedge
(B\Z/2)^{\sigma-1}$, resp.\ $\MSpin\wedge (B\Z/2)^{3\sigma-3}$.
\begin{thm}[{Peterson~\cite[\S 7]{Pet68}, Kirby-Taylor~\cite[Lemma 6]{KT90Pinp}}]
\label{smooth_pin_shear}
There are equivalences $\MSpin\wedge (B\Z/2)^{\sigma-1}\simeq\MTPin^-$ and $\MSpin\wedge (B\Z/2)^{3\sigma-3}\simeq
\MTPin^+$.\footnote{There is an important subtlety in the names of these spectra in the literature: $\MPin^\pm$
denotes the Thom spectra classifying pin\textsuperscript{$\pm$} structures on the stable normal bundle, and
$\MTPin^\pm$ denotes the Thom spectra classifying pin\textsuperscript{$\pm$} structures on the stable tangent
bundle. There are equivalences $\MPin^\pm\simeq\MTPin^\mp$. Information on pin\textsuperscript{$\pm$} bordism is
usually stated in terms of $\MTPin^\pm$.}
\end{thm}
These bordism groups are known.
\begin{itemize}
	\item In the case $w_2(\nu) = 0$, $\Omega_4^\xi\cong\Omega_4^{\Pin^-}\cong 0$~\cite{ABP69, KT90} --- all
	$4$-manifolds with this normal $1$-type are stably diffeomorphic.
	\item When $w_2(\nu) = p^*x^2$, $\Omega_4^\xi\cong\Omega_4^{\Pin^+}\cong\Z/16$~\cite{Gia73a, KT90Pinp, KT90}.
\end{itemize}
In the latter case we have to determine the $\Aut(\xi)$-action. $\RP^4$ admits two \pinp structures, and
Kirby-Taylor~\cite[Theorem 5.2]{KT90} choose an isomorphism $\Omega_4^{\Pin^+}\overset\cong\to\Z/16$ sending these
two \pinp structures to $\pm 1$. Therefore for any equivalence class $x\in\Omega_4^{\Pin^+}$ and any
$g\in\Aut(\Pin^+)$, $g\cdot x = \pm x$, because we can represent $x$ as a disjoint union of copies of $\RP^4$ with
some \pinp structure, and the $\Aut(\Pin^+)$-orbit of the $\RP^4$s is $\set{\pm 1}$. The isomorphism from
$\xi$-bordism to \pinp bordism allows us to also deduce that the $\Aut(\xi)$-orbit of a class $[M]$ in
$\Omega_4^\xi$ is $\set{\pm[M]}$. We obtain nine equivalence classes: $0, \pm 1, \pm 2,\dotsc,\pm 7, 8$, detected
by the image of the $\eta$-invariant in $(\Z/16)/(x\sim -x)$.
\end{proof}
As a consequence of Kreck's classification in \cref{krecks_examples}, we have seen that all unorientable, almost
spin $4$-manifolds $M$ with $\pi_1(M)\cong G$ are either \pinp or \pinm, and that this determines their normal
$1$-type. This is not true for more general $G$.
\begin{exm}
\label{fake_RP4}
Cappell-Shaneson~\cite{CS71, CS76} construct a closed, smooth $4$-manifold $Q$ that is homeomorphic but not
diffeomorphic to $\RP^4$, and show that $Q$ and $\RP^4$ are not stably diffeomorphic. Stolz~\cite{Sto88} gives
another proof of this fact by computing the classes of $\RP^4$ and $Q$ in $\Omega_4^\xi/\Aut(\xi)$. We briefly
summarize Stolz' proof.

Since $\pi_1(\RP^4)\cong\Z/2$ and $w_2(\RP^4) = 0$, the proof of
\cref{almost_spin_classes} shows $M\xi\simeq\MTPin^+$, $\Omega_4^\xi\cong\Z/16$, and the set of stable
diffeomorphism classes is $\Omega_4^\xi/\Aut(\xi)\cong (\Z/16)/(x\sim -x)$. Stolz~\cite{Sto88} chooses an
isomorphism $\Omega_4^\xi\overset\cong\to\Z/16$ and shows that it sends the two \pinp structures on $\RP^4$
to $\pm 1$ and the two \pinp structures on $Q$ to $\pm 9$; therefore $\RP^4$ and $Q$ are not stably
diffeomorphic.
\end{exm}
\subsection{The totally non-spin case}
\begin{thm}
\label{non_spin_classes}
There are four stable diffeomorphism classes of unorientable, totally non-spin $4$-manifolds with $\pi_1(M)\cong
G$. The Stiefel-Whitney numbers $w_4$ and $w_2^2$ detect these classes.
\end{thm}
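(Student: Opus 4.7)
The plan mirrors the proof of \cref{almost_spin_classes}: identify the normal $1$-type, simplify its Thom spectrum via \cref{Thom_simplification}, recognize the result as a classical Thom spectrum, and then determine the $\Aut(\xi)$-action. By \cref{krecks_examples}, the normal $1$-type in the totally non-spin case is
\[
\xi = V_\SO \oplus (E - 1)\colon B\SO \times BG \longrightarrow B\O,
\]
where $E \to BG$ is a line bundle with $w_1(E) = x$ and $\theta^* x = w_1(M)$. By \cref{cohring}, $x = p^* x_0$ for $x_0$ the generator of $H^1(B\Z/2;\Z/2)$, so I may take $E = p^*\sigma$. Therefore $M\xi \simeq \MSO \wedge (BG)^{p^*\sigma - 1}$, and \cref{Thom_simplification} applied to the unorientable virtual bundle $V = \sigma - 1$ on $B\Z/2$ yields
\[
M\xi \simeq \MSO \wedge (B\Z/2)^{\sigma - 1}.
\]

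The key step is the identification $\MSO \wedge (B\Z/2)^{\sigma - 1} \simeq \MO$. The Whitney-sum map $\mu\colon B\SO \times B\Z/2 \to B\O$, $(V, L) \mapsto V \oplus (L - 1)$, is a weak equivalence: it induces an isomorphism on $\pi_1$ (supplied by the map $B\Z/2 \to B\O$ classifying $\sigma$) and on $\pi_k$ for $k \geq 2$ (where $B\SO \to B\O$ is the universal double cover). Under $\mu$, the tautological rank-$0$ virtual bundle on $B\O$ pulls back to $V_\SO \oplus (\sigma - 1)$, so the Thom spectra agree. Hence $\Omega_4^\xi \cong \Omega_4^\O \cong \Z/2 \oplus \Z/2$, with the isomorphism to $\Z/2 \oplus \Z/2$ given by the Stiefel-Whitney numbers $\langle w_2^2, [M]\rangle$ and $\langle w_4, [M]\rangle$, by Thom's computation of unoriented bordism in dimension $4$.

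To conclude, observe that $w_2^2$ and $w_4$ are Stiefel-Whitney numbers of the underlying smooth manifold $M$, independent of any choice of $\xi$-smoothing, so they are invariant under $\Aut(\xi)$. Since they already distinguish the four elements of $\Omega_4^\xi$, the $\Aut(\xi)$-action must be trivial, and by \cref{its_bordism} there are exactly four stable diffeomorphism classes, completely detected by $(w_2^2, w_4)$. The main obstacle is verifying $\MSO \wedge (B\Z/2)^{\sigma-1} \simeq \MO$ by identifying the universal virtual bundles on $B\O$ and $B\SO \times B\Z/2$; alternatively, one could compute $\Omega_4^\xi$ via the Atiyah–Hirzebruch spectral sequence, whose only contributions to total degree $4$ come from $(p, q) = (0, 4)$ and $(4, 0)$, each $\Z/2$, but one would then need to rule out a $\Z/4$ extension separately, for instance by exhibiting the surjectivity of $(w_2^2, w_4)$ directly.
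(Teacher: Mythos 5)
Your proposal is correct and follows essentially the same outline as the paper: identify the unique normal $1$-type via \cref{krecks_examples}, split the Thom spectrum and apply \cref{Thom_simplification} to reduce to $\MSO\wedge(B\Z/2)^{\sigma-1}$, identify this with $\MO$, and then deduce triviality of the $\Aut(\xi)$-action from the fact that the Stiefel--Whitney numbers are intrinsic to the underlying manifold. The one notable difference is that where the paper cites Gray~\cite{Gra80} for the equivalence $\MSO\wedge(B\Z/2)^{\sigma-1}\simeq\MO$, you supply a self-contained proof by checking that the Whitney-sum map $\mu\colon B\SO\times B\Z/2\to B\O$ is a $\pi_*$-isomorphism carrying the universal rank-zero virtual bundle on $B\O$ back to $V_\SO\oplus(\sigma-1)$; this is a valid and somewhat more elementary argument than invoking Gray, and your observation that the resulting bordism isomorphism intertwines Stiefel--Whitney numbers (since it is induced by maps of spaces over $B\O$) is exactly what makes the $\Aut(\xi)$-triviality step go through.
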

This theorem can also be extracted from work of Davis~\cite[Theorem 2.3]{Dav05}, who computes a different set of
invariants.
\begin{proof}
\Cref{krecks_examples} shows there is only one unorientable normal $1$-type in this case: $w_1(\nu)\ne 0$, so it
must be pulled back from $p^*x\in H^1(B\pi_1(M);\Z/2)$. Since $p^*x = w_1(p^*\sigma)$, the normal $1$-type is
\begin{equation}
	V_{\SO}\oplus (p^*\sigma - 1)\colon B\SO\times B\pi_1(M)\longrightarrow B\O
\end{equation}
and its Thom spectrum is $\MSO\wedge (B\pi_1(M))^{p^*\sigma - 1}$, which by \cref{Thom_simplification} is
equivalent to $\MSO\wedge (B\Z/2)^{\sigma-1}$.
\begin{lem}[{Gray~\cite[\S 2]{Gra80}}]
\label{MSO_MO_splitting}
There is an equivalence $\MSO\wedge (B\Z/2)^{\sigma-1}\simeq\MO$.
\end{lem}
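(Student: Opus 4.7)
The plan is to produce a homotopy equivalence $\phi\colon B\SO\times B\Z/2 \to B\O$ classified by the rank-zero virtual bundle $V_\SO \oplus (\sigma - 1)$, and then pass to Thom spectra. The key geometric observation is that the classifying map $w_1\colon B\O\to B\Z/2$ of the first Stiefel-Whitney class admits a section, namely the map $B\Z/2\to B\O$ classifying $\sigma$: indeed $w_1(\sigma)$ is the generator of $H^1(B\Z/2;\Z/2)$, so the composite $w_1\circ\sigma$ is the identity. Combining this section with the canonical map $B\SO\to B\O$ via direct sum of virtual bundles gives the map $\phi$, which by construction classifies $V_\SO\oplus(\sigma - 1)$.

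Next I would verify that $\phi$ is a weak equivalence by inspecting the long exact sequence of the fibration $B\SO\to B\O\xrightarrow{w_1} B\Z/2$. The section forces $\pi_1(B\O)\cong\Z/2$ to be generated by the image of $\pi_1(B\Z/2)$, while for $n\ge 2$ the map $\pi_n(B\SO)\to\pi_n(B\O)$ is an isomorphism because $\pi_n(B\Z/2)=0$. On the product side, $\pi_n(B\SO\times B\Z/2)$ decomposes as the obvious direct sum, and $\phi$ sends each factor to the corresponding summand. Whitehead's theorem then gives that $\phi$ is a homotopy equivalence.

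Passing to Thom spectra, functoriality produces an equivalence between the Thom spectrum of $V_\SO\oplus(\sigma-1)$ over $B\SO\times B\Z/2$ and the Thom spectrum of the universal rank-zero virtual bundle over $B\O$, the latter being $\MO$ by definition. Since the Thom spectrum of an external direct sum over a product space is the smash product of the individual Thom spectra, the former equals $\MSO\wedge (B\Z/2)^{\sigma-1}$, yielding the lemma. I expect the main subtlety, more than any real obstacle, to lie in the bookkeeping that verifies $\phi^*$ of the tautological rank-zero virtual bundle on $B\O$ agrees with $V_\SO\oplus(\sigma-1)$; this is immediate from how $\phi$ was built but bears making explicit, since otherwise one only recovers an equivalence of base spaces rather than of Thom spectra.
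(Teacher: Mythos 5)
Your proof is correct. The paper does not give its own argument for this lemma; it cites Gray~\cite[\S 2]{Gra80} and moves on, so there is no in-paper proof to compare against. Your route --- building the map $\phi\colon B\SO\times B\Z/2\to B\O$ classified by $V_\SO\oplus(\sigma-1)$, checking it is a weak equivalence on homotopy groups using the section of $w_1\colon B\O\to B\Z/2$ given by (the rank-zero virtual bundle attached to) $\sigma$, invoking Whitehead, and then passing to Thom spectra via the external-sum $=$ smash-product identity --- is the standard one and each step is sound: for $n\ge 2$ the restriction $B\SO\to B\O$ is a $\pi_n$-isomorphism since $\SO\to\O$ is an isomorphism on $\pi_{n-1}$, while on $\pi_1$ the restriction to $B\Z/2$ splits $w_1$ and hence is an isomorphism onto $\pi_1(B\O)\cong\Z/2$. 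Two small bookkeeping notes: the section should be taken to classify the rank-zero virtual bundle $\sigma-1$ rather than $\sigma$ itself, so that it lands in the basepoint component of $B\O$ in the virtual-bundle model (you implicitly do this when defining $\phi$, but the sentence constructing the section says ``classifying $\sigma$''); and both source and target have CW homotopy type, which is what licenses the appeal to Whitehead's theorem. Neither affects the correctness of the argument.
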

So $\Omega_4^\xi\cong\Omega_4^\O$, and $\Omega_4^\O\cong\Z/2\oplus\Z/2$~\cite[Corollaire following Théorème
IV.12]{ThomThesis}. The $\Aut(\xi)$-action is trivial. To see this, first observe that $\Aut(\mathrm{id}\colon
B\O\to B\O)$ is trivial, hence acts trivially on $\Omega_4^\O$. Thus the $\Aut(\xi)$-orbit of a class in
$\Omega_4^\xi$ maps to a single class in $\Omega_4^\O$, so $\Aut(\xi)$-orbits are singletons. Therefore any
complete bordism invariant for $\Omega_4^\O$ also is a complete stable diffeomorphism invariant for the normal
$1$-type $\xi$, such as $(w_2^2, w_4)$.
\end{proof}
\begin{rem}
\label{trichotomy}
If $M$ is \pinp or \pinm, then its double cover is spin, and hence $M$ is almost spin. So totally non-spin
manifolds are neither \pinp nor \pinm. Therefore the three normal $1$-types that occur when $\pi_1(M)\cong G$ and
$M$ is unorientable are the cases \pinp, \pinm, and neither \pinp nor \pinm.
\end{rem}


\section{Stable homeomorphism classes}
	\label{s_topl}
In order to classify stable homeomorphism classes of topological $4$-manifolds, we run the same story, replacing
$B\O$ with $B\Top$, where $\Top_n$ is the topological group of homeomorphisms $\R^n\to\R^n$ that fix the origin and
$\Top\coloneqq\varinjlim_n\Top_n$. As in the previous section, fix a group $G$ finite of order $2\bmod 4$ with a
surjective map $p\colon G\to\Z/2$, and assume that $\Z/2$ acts trivially on $H^*(B\ker(p))$.

Given a topological manifold $M$, there is a map $\nu\colon M\to B\Top$ called
the \term{stable topological normal bundle}, so we can define normal $1$-types, and Kreck's classification argument
still applies in the topological setting, this time determining stable homeomorphism classes.
\begin{lem}
\label{topological_examples}
Let $M$ be a closed, unorientable $4$-manifold. The possible normal $1$-types of $M$ are the same as in
\cref{krecks_examples}, except replacing $B\O$ with $B\Top$, $B\SO$ with $B\STop$, and $B\Spin$ with $B\TopSpin$.
\end{lem}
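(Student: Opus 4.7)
The plan is to rerun Kreck's derivation of Example \ref{krecks_examples} verbatim, with every occurrence of $\O, \SO, \Spin$ replaced by $\Top, \STop, \TopSpin$. The only substantive input that needs verification is that $B\Top$ has the same first two homotopy groups as $B\O$, namely $\pi_1(B\Top) \cong \Z/2$ (detected by $w_1$) and $\pi_2(B\Top) \cong \Z/2$ (detected by $w_2$). This is a consequence of the Kirby--Siebenmann computation $\pi_i(\Top/\O) = 0$ for $i \le 2$, so the canonical map $B\O\to B\Top$ is a $3$-equivalence. In particular, the Stiefel--Whitney classes $w_1, w_2$ are defined for topological bundles and have the same meaning; and $B\TopSpin \to B\STop \to B\Top$ is the first two stages of the Postnikov tower of $B\Top$, with fibers $K(\Z/2, 1)$ and $K(\Z/2, 0)$, precisely as for $B\Spin \to B\SO \to B\O$.

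First I would recall that a normal $1$-type is a $2$-connected lift of $\nu\colon M\to B\Top$ across a $2$-coconnected fibration. Since $M$ is unorientable, $w_1(\nu) \ne 0$, and any class in $H^1(M;\Z/2)$ is pulled back from $B\pi_1(M)$, so $w_1(\nu) = \theta^* x_1$ for a unique $x_1\in H^1(B\pi_1(M);\Z/2)$. The universal cover $M'$ is simply connected, so it admits a $\TopSpin$ structure iff $w_2(\nu|_{M'}) = 0$, equivalently iff $w_2(\nu) = \theta^* x_2$ for some $x_2\in H^2(B\pi_1(M);\Z/2)$. This dichotomy reproduces the almost-spin/totally non-spin split.

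Second, in each case I would construct $\xi$ exactly as in \cref{krecks_examples}. In the almost-spin case, assuming the existence of a topological microbundle $E\to B\pi_1(M)$ with $w_i(E) = x_i$ for $i=1,2$ (the analogue of the assumption in the smooth statement), I take
\begin{equation*}
	\xi = V_{\TopSpin}\oplus (E - \dim E)\colon B\TopSpin\times B\pi_1(M)\longrightarrow B\Top.
\end{equation*}
In the totally non-spin case I take the analogous bundle $V_{\STop}\oplus(E-1)$ for $E\to B\pi_1(M)$ a topological line bundle with $w_1(E) = x$. That $\xi$ is $2$-coconnected is immediate from the Postnikov data above together with the fact that $B\pi_1(M)$ is a $K(\pi,1)$; that the map $\widetilde{\nu}\colon M\to B$ built from the $\TopSpin$/$\STop$ lift over $M'$ and the classifying map $\theta$ is $2$-connected is the same cell-by-cell argument Kreck gives in the smooth case, using only that $\widetilde{\nu}$ induces an isomorphism on $\pi_1$ and a surjection on $\pi_2$ (both of which reduce to standard considerations with $w_1, w_2$).

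The main potential obstacle is the identification $\pi_i(B\Top) \cong \pi_i(B\O)$ for $i\le 2$; once this (classical) fact is in hand, no part of Kreck's argument interacts with the distinction between the smooth and topological categories, since everything is detected by $w_1, w_2$ and the Postnikov structure through degree $2$.
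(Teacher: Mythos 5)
Your proposal is correct and takes essentially the same approach as the paper: both reduce to the observation that $B\Top$, $B\STop$, $B\TopSpin$ have the same low-dimensional homotopy data as their smooth counterparts (which the paper states as $\pi_2(B\TopSpin)=0$ and $\pi_1(B\STop)\cong\Z/2$ detected by $w_2$, while you derive from the Kirby--Siebenmann vanishing $\pi_i(\Top/\O)=0$ for $i\le 2$), and then rerun Kreck's $2$-connectedness argument verbatim. The paper is terser, simply deferring to the analogous orientable result of Kasprowski--Land--Powell--Teichner, but the substance is identical.
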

\begin{proof}
The proof is very similar to Kasprowski-Land-Powell-Teichner's determination of the possible normal $1$-types of
topological $4$-manifolds in the orientable case~\cite[Proposition 4.1]{KLPT17}. Since the Stiefel-Whitney classes
of a manifold are homotopy invariants, notions of almost spin and totally non-spin make sense for topological
manifolds. In the almost-spin case, we have to check that a lift $M\to B\TopSpin\times B\pi_1(M)$ is $2$-connected:
the proof is the same as in the smooth case, because $\pi_2(B\TopSpin) = 0$. For the totally non-spin case,
$\pi_1(B\STop) \cong\Z/2$, detected by $w_2$, and since $M$ is totally non-spin, $w_2(M)\ne 0$, so the lift is
surjective on $\pi_2$ just as in the smooth case.
\end{proof}
Our arguments below make use of the fact that bordism groups of topological manifolds are homotopy groups of Thom
spectra, which requires a transversality argument. In dimension $4$, Scharlemann~\cite{Sch76} proves the
topological transversality theorem that we need. See Teichner~\cite[\S IV]{Tei93} for more information.

Let $E_8$ denote Freedman's $E_8$ manifold~\cite{Fre82}. The obstruction to admitting a triangulation defines a
bordism invariant $\Omega_4^{\Top}\to\Z/2$~\cite[\S 9]{KT90} which is nonzero on $E_8$.
\subsection{The almost spin case}
There are topological versions of spin and pin\textsuperscript{$\pm$} structures; see Kirby-Taylor~\cite[\S
9]{KT90} for details. Kirby-Taylor also produce a homomorphism $S\colon
\Omega_4^{\TopPin^+}\to\Omega_2^{\TopPin^-}\cong\Omega_2^{\Pin^-}\cong\Z/8$ sending a \pinp topological
$4$-manifold $M$ to the \pinm bordism class of a continuously embedded representative of the Poincaré dual of
$w_1(M)^2$, which has an induced \pinm structure and a unique smooth structure. Let $S'$ be the invariant sending a
topological \pinp $4$-manifold $M$ to the image of $S(M)$ in the set $(\Z/8)/(x\sim -x)$.
\begin{thm}
\label{top_almost_spin}
\hfill
\begin{enumerate}
	\item There are ten stable homeomorphism classes of unorientable \pinp topological $4$-manifolds with
	$\pi_1(M)\cong G$. These classes are detected by the invariant $S'$ constructed above and the triangulation
	obstruction.
	\item There are two stable homeomorphism classes of unorientable \pinm topological $4$-manifolds with
	$\pi_1(M)\cong G$. These classes are detected by the triangulation obstruction.
\end{enumerate}
\end{thm}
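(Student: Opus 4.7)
My plan is to adapt the proof of \cref{almost_spin_classes} essentially verbatim to the topological category. By \cref{topological_examples}, the two unorientable almost-spin normal $1$-types are represented by $V_{\TopSpin}\oplus (p^*\sigma-1)$ (for which $w_2(\nu)=0$, so $M$ is tangentially \pinm) and $V_{\TopSpin}\oplus (p^*(3\sigma)-3)$ (for which $w_2(\nu)=p^*x^2$, so $M$ is tangentially \pinp). The associated Thom spectra $\MTopSpin\wedge (B\pi_1(M))^{p^*\sigma-1}$ and $\MTopSpin\wedge (B\pi_1(M))^{p^*(3\sigma)-3}$ simplify via \cref{Thom_simplification} to $\MTopSpin\wedge (B\Z/2)^{\sigma-1}$ and $\MTopSpin\wedge (B\Z/2)^{3\sigma-3}$ respectively.

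Next I would establish a topological analog of \cref{smooth_pin_shear}, giving equivalences $\MTopSpin\wedge (B\Z/2)^{\sigma-1}\simeq\MTTopPin^-$ and $\MTopSpin\wedge (B\Z/2)^{3\sigma-3}\simeq\MTTopPin^+$. The shearing argument of Peterson and of Kirby-Taylor is essentially formal once one has the fiber sequences $B\TopSpin\to B\TopPin^\pm\to B\Z/2$, which Kirby-Taylor develop in the topological category in \cite[\S 9]{KT90}. Combined with Scharlemann's topological transversality theorem, which lets us identify topological bordism groups with homotopy groups of the corresponding Thom spectra, this yields $\Omega_4^\xi\cong\Omega_4^{\TopPin^-}$ in the \pinm case and $\Omega_4^\xi\cong\Omega_4^{\TopPin^+}$ in the \pinp case. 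Both are computed by Kirby-Taylor: $\Omega_4^{\TopPin^-}\cong\Z/2$, detected by the triangulation obstruction, while $\Omega_4^{\TopPin^+}$ is detected jointly by the Kirby-Taylor homomorphism $S$ and the triangulation obstruction.

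It remains to determine the $\Aut(\xi)$-orbits. In the \pinm case the triangulation obstruction is a homeomorphism invariant of the underlying space and so is insensitive to any change of normal $1$-smoothing, giving a trivial action and two stable homeomorphism classes. In the \pinp case I would argue as in the smooth proof of \cref{almost_spin_classes} using $\RP^4$: it admits two topological \pinp structures interchanged by an element of $\Aut(\xi)$, and in Kirby-Taylor's normalization these map to $\pm 1\in\Z/8$ under $S$. Because $\RP^4$ is smooth its triangulation obstruction vanishes, so this generator of $\Aut(\xi)$ acts as multiplication by $-1$ on the $S$-factor and trivially on the triangulation factor. Taking orbits produces $(\Z/8)/(x\sim -x)$ paired with $\Z/2$, a set of ten elements, with $(S', \text{triangulation obstruction})$ a complete invariant.

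The main obstacle I anticipate is twofold. First, one must verify that the shearing equivalence of \cref{smooth_pin_shear} transfers cleanly to the topological setting, i.e.\ that $B\TopSpin$ and $B\TopPin^\pm$ enjoy the same homotopical relationships as their smooth counterparts used in the Peterson/Kirby-Taylor argument; this is essentially bookkeeping with Kirby-Taylor's topological pin package, but is worth executing carefully. Second, one must confirm that no element of $\Aut(\xi)$ acts nontrivially on the triangulation obstruction factor of $\Omega_4^{\TopPin^+}$; this should follow from the fact that triangulability is an invariant of the underlying topological manifold and hence is preserved by any self-equivalence of the normal $1$-type.
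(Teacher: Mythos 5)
Your proposal follows the paper's proof essentially step for step: reduce the Thom spectra via \cref{Thom_simplification}, prove a topological analog of \cref{smooth_pin_shear} by the same shearing argument (the paper makes this explicit via the degree map $d\colon\Top\to\{\pm 1\}$ and the resulting $\Det$ line bundle), invoke Scharlemann for transversality, read off $\Omega_4^{\TopPin^\pm}$ from Kirby-Taylor, and then compute the $\Aut(\xi)$-quotient. The one small point of divergence is in the $\Aut(\xi)$ analysis for the $\Pin^+$ case: the paper argues that $E_8$ is fixed by $\Aut(\xi)$ because it is simply connected and hence carries a unique topological $\Pin^+$ structure, whereas you argue instead that the triangulation obstruction factor is fixed because triangulability is an invariant of the underlying topological manifold. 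Both observations establish the same fact (the $\Z/2$ summand is pointwise fixed), and combined with the $\RP^4$ orbit being $\pm[\RP^4]$ they give the same ten-element quotient, so this is a cosmetic rather than substantive difference.
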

\begin{proof}
Following the same line of argument as in the proof of \cref{almost_spin_classes}, the two normal $1$-types' Thom
spectra are $\MTopSpin\wedge (B\pi_1(M))^{p^*\sigma-1}$ and $\MTopSpin\wedge (B\pi_1(M))^{p^*(3\sigma)-3}$, and
\cref{Thom_simplification} simplifies these to $\MTopSpin\wedge (B\Z/2)^{\sigma-1}$ and $\MTopSpin\wedge
(B\Z/2)^{3\sigma-3}$, respectively.
\begin{lem}
There are equivalences $\MTopSpin\wedge (B\Z/2)^{\sigma-1}\simeq\MTTopPin^-$ and $\MTopSpin\wedge
(B\Z/2)^{3\sigma-3}\simeq\MTTopPin^+$.
\end{lem}
\begin{proof}
There are surjective maps $d_n\colon \Top_n\surj\set{\pm 1}$ given by assigning to a homeomorphism the automorphism
it defines on $H_n(\R_n, \R_n\setminus 0)\cong\Z$. These commute with the inclusions $\Top_n\inj\Top_{n+1}$, and
passing to the colimit defines a map $d\colon\Top\surj\set{\pm 1}$. This is a topological version of assigning an
orthogonal matrix its determinant, classifying whether it preserves or reverses orientation. Given a principal
$\Top$-bundle $P\to M$, let $\Det(P)\to M$ be the line bundle $P\times_{\Top}\R\to M$, where $\Top$ acts on $\R$
through $d$. The maps $\Top_n\times\O_1\to\Top_n\times\Top_1\to\Top_{n+1}$ allow us to make sense of ``$P\oplus
n\Det(P)$'' as a principal $\Top$-bundle.

We abuse notation for a moment to say that a $G$-structure on a principal $\Top$-bundle $P\to M$ is a reduction of
structure group of $P$ from $\Top$ to $G$. Then, just as in the smooth case, there is a natural equivalence between
the set of $\TopPin^-$-structures on $P$ and the set of $\TopSpin$ structures on $P\oplus\Det(P)$, and similarly
between the set of $\TopPin^+$-structures on $P$ and the set of $\TopSpin$ structures on $P\oplus 3\Det(P)$. The
proof is the same as in the smooth case. These equivalences are the only facts we need to know
about $\Pin^\pm$ in order to prove \cref{smooth_pin_shear} in the smooth setting, so the argument in the
topological setting can proceed in the same way.
%
%
\end{proof}
Therefore by \cref{Thom_simplification}, our two normal $1$-types are equivalent to $\MTTopPin^\pm$. The caveat
about switching between \pinp and \pinm when one passes between the tangent and normal bundles still applies here.
\begin{thm}[{Kirby-Taylor~\cite[Theorem 9.2]{KT90}}]\hfill
\begin{enumerate}
	\item $\Omega_4^{\TopPin^-}\cong\Z/2$, generated by $E_8$.
	\item $\Omega_4^{\TopPin^+}\cong \Z/8\oplus\Z/2$, with $\RP^4$ generating the $\Z/8$ summand and $E_8$ generating
	the $\Z/2$ summand.
	\item The map $\Omega_4^{\Pin^+}\to\Omega_4^{\TopPin^+}$ is identified with a map $\Z/16\to\Z/8\oplus\Z/2$
	which surjects onto the first factor and does not hit $E_8$.
	\item The homomorphism $S\colon \Omega_4^{\TopPin^+}\to\Omega_2^{\TopPin^-}\cong\Omega_2^{\Pin-}\cong\Z/8$
	sends $\RP^4$ to a generator.
\end{enumerate}
\end{thm}
Since $\Z/2$ is rigid, we conclude there are two stable homeomorphism classes in the \pinm case, detected by the
triangulation obstruction. For the \pinp case, the same line of reasoning in the proof of
\cref{almost_spin_classes} allows us to reduce to the case when $\xi$ is a topological \pinp structure, so we can
compute the action of $\Aut(\xi)$ on the generators. Since $E_8$ is
simply connected, it admits a unique topological \pinp structure, so is fixed by $\Aut(\xi)$. For $\RP^4$, every
topological \pinp structure on arises from a smooth \pinp structure, so we can reuse our work from
\cref{almost_spin_classes} to conclude the $\Aut(\xi)$-orbit of $\RP^4$ is again $\pm[\RP^4]$. Therefore the set of
stable diffeomorphism classes is $((\Z/8)/(x\sim -x))\times\Z/2$, which has ten elements, and the triangulation
obstruction and $S'$ are together a complete invariant.
\end{proof}
\subsection{The totally non-spin case}
By \cref{topological_examples}, there is only one normal $1$-type to worry about.
\begin{thm}
\label{top_no_spin}
There are eight stable homeomorphism classes of unorientable, totally non-spin topological $4$-manifolds with
$\pi_1(M)\cong G$. The triangulation obstruction and the Stiefel-Whitney numbers $w_4$ and $w_2^2$ are together a
complete stable homeomorphism invariant.
\end{thm}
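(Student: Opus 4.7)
The plan is to follow the same line of argument used in the proof of \cref{non_spin_classes}, with the smooth ingredients replaced by their topological analogs. By \cref{topological_examples}, the unique unorientable totally non-spin normal $1$-type has the form
\begin{equation}
  V_{\STop}\oplus (p^*\sigma - 1)\colon B\STop\times B\pi_1(M)\longrightarrow B\Top,
\end{equation}
so its Thom spectrum is $\MSTop\wedge (B\pi_1(M))^{p^*\sigma-1}$. Applying \cref{Thom_simplification} to the assumed extension $\ker(p)\to G\to \Z/2$ collapses this to $\MSTop\wedge (B\Z/2)^{\sigma-1}$.

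The next step is a topological version of Gray's splitting (\cref{MSO_MO_splitting}): I would establish an equivalence $\MSTop\wedge (B\Z/2)^{\sigma-1}\simeq \MTop$. This should go through by precisely the same argument as in the smooth case, now using the determinant-type map $d\colon\Top\surj\set{\pm 1}$ and the associated line bundle $\Det$ introduced in the \pinp lemma above. As there, a $\Top$-structure on $P\to M$ corresponds to a $\STop$-structure on $P\oplus\Det(P)$, which exhibits $\MTop$ as the Thom spectrum of $V_\STop\oplus (\sigma-1)$ over $B\STop\times B\Z/2$; the projection to $B\Z/2$ then identifies this with $\MSTop\wedge (B\Z/2)^{\sigma-1}$. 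The main subtlety here will be confirming that the formal manipulations used by Gray apply verbatim in the topological category; Scharlemann's transversality theorem, cited earlier, ensures that Thom-spectrum interpretations of topological bordism are valid in dimension $4$.

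Granted this equivalence, we obtain $\Omega_4^\xi\cong\Omega_4^{\Top}$. By the computation in \cref{top_4_bordism}, $\Omega_4^{\Top}\cong \Z/2\oplus\Z/2\oplus\Z/2$, with the first two factors detected by $w_2^2$ and $w_4$ (as in the smooth case, since the forgetful map $\Omega_4^{\O}\to\Omega_4^{\Top}$ is injective on these invariants) and the third factor by the triangulation obstruction, represented by $E_8$.

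It remains to determine the $\Aut(\xi)$-action. As in the smooth case, the equivalence $M\xi\simeq\MTop$ together with the fact that $\Aut(\mathrm{id}\colon B\Top\to B\Top)$ is trivial implies that the $\Aut(\xi)$-orbits in $\Omega_4^\xi$ are singletons, so $\Omega_4^\xi/\Aut(\xi)$ has eight elements. Any complete invariant for $\Omega_4^{\Top}$ is therefore a complete stable homeomorphism invariant for this normal $1$-type, and $(w_2^2,w_4,\text{triangulation obstruction})$ is such a triple.
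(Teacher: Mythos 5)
Your proposal is correct and follows essentially the same line of argument as the paper: reduce the Thom spectrum to $\MSTop\wedge (B\Z/2)^{\sigma-1}$ via \cref{Thom_simplification}, identify it with $\MTop$ by the topological analog of Gray's splitting, invoke \cref{top_4_bordism} for $\Omega_4^{\Top}$, and note that $\Aut(\xi)$ acts trivially. The only structural difference is that the paper proves \cref{top_4_bordism} inline (via the Atiyah--Hirzebruch spectral sequence for $\MSTop\wedge(B\Z/2)^{\sigma-1}$) whereas you cite it as a standing result, which is fine since the paper also states it as a standalone proposition.
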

Again, this can be extracted from a theorem of Davis~\cite[Theorem 2.3]{Dav05}, who uses a different but equivalent
set of invariants.
\begin{proof}
Following the same line of reasoning as in \cref{non_spin_classes}, \cref{topological_examples} tells us we only
have one normal $1$-type, and its Thom spectrum is $\MSTop\wedge (B\Z/2)^{\sigma-1}$.
\begin{lem}
There is an equivalence $\MTop\simeq\MSTop\wedge (B\Z/2)^{\sigma-1}$.
\end{lem}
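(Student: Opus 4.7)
The plan is to run the topological analog of Gray's argument. The determinant map $d\colon \Top\surj\set{\pm 1}$ constructed in the proof of the preceding lemma, together with the tautological inclusion $\set{\pm 1} = \O_1\hookrightarrow\Top_1\hookrightarrow\Top$ splitting $d$, exhibits $\Top$ as a semidirect product $\STop\rtimes\Z/2$, exactly parallel to the splitting $\O = \SO\rtimes\Z/2$ that underlies \cref{MSO_MO_splitting}.

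Using this splitting, I would construct a map
\begin{equation}
	f\colon B\STop\times B\Z/2\longrightarrow B\Top
\end{equation}
classifying the rank-zero virtual $\Top$-bundle $V_\STop\oplus(\sigma - 1)$, where $\sigma$ denotes the tautological line bundle on $B\Z/2$ regarded as a $\Top_1$-bundle via $\O_1\hookrightarrow\Top_1$. By construction $f^*V_\Top$ is the exterior sum $V_\STop\boxplus(\sigma - 1)$, so the induced map of Thom spectra is precisely $\MSTop\wedge (B\Z/2)^{\sigma-1}\to \MTop$. It therefore suffices to show that $f$ is a weak equivalence.

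I would verify this directly on homotopy groups. Since $\STop$ is by definition the identity component of $\Top$, $\pi_1(B\Top)\cong\pi_0(\Top)\cong\Z/2$ is detected by $d$, while $\pi_1(B\STop\times B\Z/2)\cong\pi_1(B\Z/2)\cong\Z/2$; the section of $d$ ensures that $\pi_1(f)$ sends the generator to the orientation-reversing component, hence is an isomorphism. For $n\ge 2$, $\pi_n(B\Z/2) = 0$ and the inclusion $\STop\hookrightarrow\Top$ is a homotopy equivalence on identity components, so $\pi_n(B\STop)\to\pi_n(B\Top)$ is an isomorphism; since the pullback of $\sigma - 1$ along any map factoring through $B\STop$ is trivial, $\pi_n(f)$ agrees with this isomorphism.

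The only real subtlety is bookkeeping: one must verify that the shearing $V_\STop\boxplus(\sigma - 1)$ genuinely comes from an honest map of classifying spaces (using the section of $d$ above), and then invoke homotopy invariance of Thom spectra to pass from the weak equivalence $f$ to the desired equivalence of spectra. Both are formal and proceed in the same way as in the proof of the preceding lemma.
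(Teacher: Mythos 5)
Your proposal is correct and takes the same approach as the paper, which simply asserts that the smooth shearing argument of \cref{MSO_MO_splitting} carries over once one has a determinant map $d\colon\Top\to\Z/2$ and the canonical orientation on $P\oplus\Det(P)$; your write-up is exactly the expansion of that one-line proof, constructing the shear map $f\colon B\STop\times B\Z/2\to B\Top$ and checking it is a weak equivalence on homotopy groups. The only minor point worth flagging is that your $\pi_1$ and $\pi_n$ bookkeeping implicitly uses $\pi_0(\Top)\cong\Z/2$ (so that $\ker(d)=\STop$ really is the identity component); one can sidestep this input by comparing the fibration $B\STop\to B\Top\xrightarrow{Bd} B\Z/2$, with section induced by $\O_1\hookrightarrow\Top$, against the trivial fibration $B\STop\to B\STop\times B\Z/2\to B\Z/2$ and applying the five lemma.
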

\begin{proof}
The proof goes through as in the smooth case, since we have a determinant map and the fact that for any
$\Top$-bundle $P\to M$, $P\oplus\Det(P)$ is canonically oriented, analogously to the smooth case.
\end{proof}
So we need to calculate $\Omega_4^\Top$.
\begin{prop}
\label{top_4_bordism}
$\Omega_4^\Top\cong (\Z/2)^{\oplus 3}$, with a basis given by the classes of $\RP^4$, $\RP^2\times\RP^2$, and
$E_8$. The Stiefel-Whitney numbers $w_4$ and $w_2^2$ and the triangulation obstruction are linearly independent on
this bordism group.
\end{prop}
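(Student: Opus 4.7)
The plan is to bracket $|\Omega_4^\Top|$ above and below by $8$, and then verify that the three classes in the statement realize a basis. For the lower bound I would exhibit three classes detected by the three listed invariants; for the upper bound I would run the Atiyah--Hirzebruch spectral sequence associated with the splitting $\MTop \simeq \MSTop \wedge (B\Z/2)^{\sigma-1}$ just established.

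For the lower bound, compute $(w_4, w_2^2, KS)$ on each candidate. For $\RP^4$, the formula $w(\RP^4) = (1+a)^5 \equiv 1 + a + a^4 \pmod 2$ gives $(w_4, w_2^2) = (1, 0)$, and $KS = 0$ because $\RP^4$ is smooth. For $\RP^2 \times \RP^2$, the Whitney sum formula applied to $w(\RP^2) = 1 + a + a^2$ yields $w_4 = a^2 b^2$ and, using characteristic~$2$, $w_2^2 = (a^2 + ab + b^2)^2 = a^4 + a^2 b^2 + b^4 = a^2 b^2$, so $(w_4, w_2^2) = (1, 1)$ and $KS = 0$. For $E_8$, being topologically spin forces $w_2 = 0$; its Euler characteristic is $10$, so $w_4 \equiv e \equiv 0 \pmod 2$; and $KS = 1$ by Freedman. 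The resulting $3 \times 3$ matrix over $\Z/2$ is upper unitriangular, so the homomorphism $(w_4, w_2^2, KS)\colon \Omega_4^\Top \to (\Z/2)^3$ is surjective and $|\Omega_4^\Top| \ge 8$.

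For the upper bound, consider the AHSS
\[ E^2_{p, q} = H_p\bigl((B\Z/2)^{\sigma-1};\, \Omega_q^\STop\bigr) \Longrightarrow \Omega_{p+q}^\Top. \]
The twisted Thom isomorphism identifies $H_*((B\Z/2)^{\sigma-1}; \Z)$ with $H_*(B\Z/2; \Z_\sigma)$, and a direct computation using the standard periodic resolution of $\Z$ over $\Z[\Z/2]$ shows this equals $\Z/2$ in every non-negative even degree and vanishes in odd degrees. The coefficient groups are $\Omega_q^\STop = \Omega_q^\SO = 0$ for $q \in \{1, 2, 3\}$ by Moise's theorem, while $\Omega_4^\STop \cong \Z \oplus \Z/2$ (with generators $\mathbb{CP}^2$ and $E_8$, detected by signature and Kirby--Siebenmann) follows from Freedman's classification combined with the Postnikov analysis of $B\STop$. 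Thus on the line $p + q = 4$ the only potentially non-zero entries are $E^2_{4, 0} \cong \Z/2$ and $E^2_{0, 4} \cong \Z/2 \otimes_\Z (\Z \oplus \Z/2) \cong (\Z/2)^2$ (the latter by the universal coefficient theorem, since $H_0 = \Z/2$ and $H_{-1} = 0$). Every differential in or out of these two positions has source or target at some $(p', q')$ with $q' \in \{1, 2, 3\}$, so all such differentials vanish, and $|\Omega_4^\Top| \le 8$. Matching bounds then force $\Omega_4^\Top \cong (\Z/2)^3$ with the stated basis and detecting invariants.

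The hard part is the input $\Omega_4^\STop \cong \Z \oplus \Z/2$, which is where the genuinely topological content enters via Freedman's classification of simply connected topological $4$-manifolds and the Kirby--Siebenmann Postnikov analysis of $B\STop$. Everything else reduces to a twisted-coefficient group-cohomology computation for $B\Z/2$, a degree count ruling out spectral-sequence differentials, and routine Stiefel--Whitney bookkeeping on the three model manifolds.
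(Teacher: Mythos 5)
Your proposal is correct and follows essentially the same route as the paper: bound $\lvert\Omega_4^\Top\rvert$ above by $8$ via the degree-forced collapse of the Atiyah--Hirzebruch spectral sequence for $\MSTop\wedge (B\Z/2)^{\sigma-1}$ in total degree $\le 4$, then exhibit three linearly independent classes detected by $w_4$, $w_2^2$, and the Kirby--Siebenmann invariant. The paper states this tersely in three sentences; your write-up supplies the supporting details (twisted Thom isomorphism, low-degree $\Omega_q^{\STop}$, the Stiefel--Whitney and triangulation-obstruction bookkeeping on $\RP^4$, $\RP^2\times\RP^2$, and $E_8$) that the author leaves implicit.
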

\begin{proof}
Draw the Atiyah-Hirzebruch spectral sequence computing $\Omega_4^{\Top}$ as $\Omega_4^{\STop}((B\Z/2)^{\sigma-1})$.
It collapses for degree reasons in total degree $4$ and below, and the $4$-line of the $E_\infty$-page has order
$8$. Therefore it suffices to find three linearly independent nonzero elements of $\Omega_4^{\Top}$, which can be
done by computing $w_4$, $w_2^2$, and the triangulation obstruction on $\RP^4$, $\RP^2\times\RP^2$, and $E_8$.
\end{proof}
Just as in the smooth case, $\Aut(\xi)$ acts trivially.
\end{proof}
\Cref{trichotomy} also applies in the topological case: the three normal $1$-types for unorientable topological
manifolds with $\pi_1(M)\cong G$ are precisely the cases where $M$ has a topological \pinp structure,
$M$ has a topological \pinm structure, and $M$ has neither.
\newcommand{\etalchar}[1]{$^{#1}$}

\end{document}